\documentclass[11pt]{article}

\usepackage[margin=1.08in]{geometry}
\usepackage{amsmath,amssymb,amsthm,mathtools}
\usepackage{microtype}
\usepackage[hidelinks]{hyperref}
\hypersetup{pdftitle={Joint Continuity and Selberg--ODE Equivalence for the Sine-beta Pair Correlation Function},pdfauthor={Weiyang Fang}}
\usepackage{enumitem}

\newtheorem{theorem}{Theorem}[section]
\newtheorem{lemma}[theorem]{Lemma}
\newtheorem{proposition}[theorem]{Proposition}
\newtheorem{corollary}[theorem]{Corollary}
\theoremstyle{remark}
\newtheorem{remark}[theorem]{Remark}

\newcommand{\R}{\mathbb R}
\newcommand{\C}{\mathbb C}
\newcommand{\E}{\mathbb E}
\newcommand{\Law}{\mathcal L}
\newcommand{\Sine}{\operatorname{Sine}}

\newcommand{\ii}{\mathrm i}
\newcommand{\ee}{\mathrm e}
\newcommand{\Id}{I}
\newcommand{\rhoS}{\rho^{(2)}}
\newcommand{\Dop}{\mathsf D}

\title{Joint Continuity and Selberg--ODE Equivalence for the\\
$\Sine_\beta$ Pair Correlation Function}
\author{Weiyang Fang}
\date{}

\begin{document}
\maketitle

\begin{abstract}
We address two questions posed by Qu and Valk\'o in their study of the pair correlation function of the $\Sine_\beta$ process.  First, we prove that the pair correlation function admits a jointly continuous version in the inverse-temperature and spatial parameters on $(0,\infty)\times\R$, removing the restriction $\beta>2$ in their joint-continuity result.  The argument uses a general observation: separate weak continuity of a family of probability laws, together with stochastic monotonicity in one parameter, implies joint weak continuity.  Applied to the terminal value of the Qu--Valk\'o diffusion, their Palm-density formula then gives the result without differentiating the Fourier expansion.

Second, for $\beta=2n$ we give a direct proof that the Qu--Valk\'o matrix-recursion power series agrees with the Selberg-integral representation recorded by Forrester.  A Krawtchouk transform converts the Qu--Valk\'o system into a $(2n+1)$-dimensional differential system with parameter $-(n+1)$, while a centered Aomoto--Selberg trace system has parameter $n+1$.  An explicit triangular differential intertwiner reflects the parameter $\eta\mapsto-\eta$.  Matching the unique Frobenius branch of exponent $2$ gives exactly the Selberg normalization.  As a consequence, every coefficient of the Qu--Valk\'o recursion is identified with an even centered moment of the corresponding Jacobi--Selberg trace statistic.
\end{abstract}

\medskip
\noindent\textbf{Keywords.} $\Sine_\beta$ process; pair correlation; beta ensembles; Selberg integral; stochastic order; Krawtchouk transform.\\
\noindent\textbf{2020 Mathematics Subject Classification.} 60B20, 60G55, 33C45.

\section{Introduction}

The $\Sine_\beta$ process is the translation-invariant bulk scaling limit of beta ensembles.  For the classical values $\beta=1,2,4$, its correlation functions inherit determinantal or Pfaffian descriptions.  For general $\beta>0$, the process is instead naturally described through stochastic differential equations and random operators.

Qu and Valk\'o recently obtained an SDE representation of the pair correlation function $\rhoS_\beta$ for all $\beta>0$ and developed several consequences of it~\cite{QuValko}.  In particular, they proved separate continuity in $\beta$ and in the spatial variable and joint continuity when $\beta>2$ away from the diagonal.  They asked whether joint continuity holds for every $\beta>0$~\cite[Problem~4]{QuValko}.  For even inverse temperature $\beta=2n$, they also derived a finite-dimensional ODE and a convergent power series for $\rhoS_{2n}$.  Forrester's earlier Selberg-integral formulas~\cite{Forrester1992,Forrester1994,ForresterBook} lead to another representation of the same limiting pair correlation, and Qu--Valk\'o asked for a direct proof that the two representations agree~\cite[Problem~6]{QuValko}.

A complementary line of work concerns fusion asymptotics, in which several arguments of a correlation function approach one another.  In~\cite{FangSecond}, the author computes the first normalized correction for $m$ merging points in the supercritical regime $m\beta>1$.  The critical and subcritical regimes $m\beta\le1$, with logarithmic and fractional-power corrections, are treated in~\cite{FangCritical}.  These short-distance results provide context for the coefficientwise comparison in Section~\ref{sec:coeff}; the present proofs of joint continuity and exact Selberg--ODE equivalence do not use the fusion expansions.

The two main results of this paper answer these questions.

\begin{theorem}[Joint continuity]\label{thm:intro-cont}
The Qu--Valk\'o representation of $\rhoS_\beta(0,\lambda)$ for $\lambda\ne0$ extends to a jointly continuous function on
\[
 (0,\infty)\times\R.
\]
The continuous extension satisfies
\[
 \rhoS_\beta(0,0)=0,\qquad \beta>0.
\]
\end{theorem}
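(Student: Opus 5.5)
The plan follows the route in the abstract: obtain joint weak continuity of the terminal law of the Qu--Valk\'o diffusion from separate continuity plus monotonicity, then push it through the Palm-density formula. Recall the shape of that formula. For $\lambda\ne0$, $\rhoS_\beta(0,\lambda)$ is the density at $\lambda$ of a random variable built from the terminal value $\alpha_\beta(\infty)$ of a diffusion. Equivalently, it is an expectation $\E\, F_\beta(\lambda,X_{\beta,\lambda})$, with $F$ continuous in $(\beta,\lambda)$ and bounded on compacts avoiding $\lambda=0$. Here $X_{\beta,\lambda}$ is the terminal value of the diffusion run at spatial parameter $\lambda$ with inverse temperature $\beta$.

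\textbf{Step 1 (abstract lemma).} Let $\mu_{s,t}$ be probability laws on $\R$. Assume $s\mapsto\mu_{s,t}$ is weakly continuous for each $t$, and $t\mapsto\mu_{s,t}$ is weakly continuous for each $s$. Assume also that $\mu_{s,t}$ is stochastically monotone in $t$, meaning the distribution functions $F_{s,t}(x)$ are monotone in $t$. Then $(s,t)\mapsto\mu_{s,t}$ is jointly weakly continuous.

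The proof is a Dini-type sandwich. Fix $(s_0,t_0)$, a continuity point $x$ of $F_{s_0,t_0}$, and $\varepsilon>0$. By continuity in $t$, choose $t_-<t_0<t_+$ with $|F_{s_0,t_\pm}(x)-F_{s_0,t_0}(x)|<\varepsilon$. One may adjust $x$ slightly so that it is a continuity point of both $F_{s_0,t_\pm}$, using approximation from nearby continuity points. Next, by continuity in $s$ at the two fixed values $t_\pm$, we get $|F_{s,t_\pm}(x)-F_{s_0,t_\pm}(x)|<\varepsilon$ for $s$ near $s_0$. Monotonicity then squeezes $F_{s,t}(x)$ between $F_{s,t_-}(x)$ and $F_{s,t_+}(x)$ for every $t\in[t_-,t_+]$. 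Hence $|F_{s,t}(x)-F_{s_0,t_0}(x)|<2\varepsilon$.

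\textbf{Step 2 (hypotheses for the diffusion).} Take $s=\beta$ and $t=\lambda$ (or $|\lambda|$, using the symmetry $\lambda\to-\lambda$). Separate weak continuity in each parameter is what Qu--Valk\'o establish in proving separate continuity; otherwise it follows from continuity of SDE solutions in parameters plus tightness of terminal values. Monotonicity in $\lambda$ should come from the comparison principle for the one-dimensional Sine$_\beta$-type phase SDE. Its drift is monotone in $\lambda$ and the equation is driven by common noise, so the solutions are ordered pathwise, and therefore so are the terminal values.

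\textbf{Step 3 (transfer to $\rhoS_\beta$ and the diagonal).} On compacts of $(0,\infty)\times(\R\setminus\{0\})$, the weak convergence from Step 1 transfers to $\rhoS_\beta$ through bounded continuous integrands. Uniform integrability is needed here and comes from moment bounds locally uniform in $\beta$. This yields joint continuity off the diagonal.

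The diagonal is the main obstacle. We must show $\rhoS_\beta(0,\lambda)\to0$ as $(\beta,\lambda)\to(\beta_0,0)$, uniformly for $\beta$ in compacts. I would first try to bound $\rhoS_\beta(0,\lambda)$ by a quantity monotone in $\lambda$ from the formula itself, for instance a probability that the diffusion completes a full winding before the terminal time. That probability tends to zero as $\lambda\to0$ for each $\beta$. Monotonicity in $\lambda$ combined with Step 1 (Dini) upgrades this pointwise convergence to locally uniform convergence in $\beta$. The alternative is a uniform short-distance bound of the form $\rhoS_\beta(0,\lambda)\le C|\lambda|^{\min(\beta,1)/2}$ with $C$ locally bounded in $\beta$, but this is likely harder to obtain for small $\beta$.
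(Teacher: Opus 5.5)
Your Steps 1 and 2 match the paper's Lemma~\ref{lem:monotone} and Proposition~\ref{prop:joint-law}. The paper runs the squeeze on $\int f\,d\nu_{s,t}$ for bounded continuous nondecreasing $f$, which avoids your continuity-point adjustment. Your off-diagonal transfer in Step 3 is also fine in outline.

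\textbf{The gap is the diagonal.} The value $\rhoS_\beta(0,0)=0$, and joint continuity at points $(\beta,0)$, are part of the statement. You flag this as the main obstacle and offer only a tentative plan, which does not work as stated. The Palm formula is $\rhoS_\beta(0,\lambda)=\int\mathcal G_\beta\,d\mu_{\beta,|\lambda|}$ with $\mathcal G_\beta(x)\propto|1-\ee^{\ii x}|^\beta$. This expectation is not bounded by the probability of a full winding, since $\mathcal G_\beta(A)>0$ on the event $0<A<2\pi$. Your alternative bound $C|\lambda|^{\min(\beta,1)/2}$ is left unproved.

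\textbf{The missing observation is that no short-distance estimate is needed.} Three facts do the work:
\begin{itemize}
\item The integrand $\mathcal G_\beta$ does not depend on $\lambda$. It is a globally bounded, continuous, $2\pi$-periodic function with $\mathcal G_\beta(0)=0$. There is no blow-up near $\lambda=0$, so your uniform-integrability step is unnecessary.
\item At $\lambda=0$ the solution of~\eqref{eq:sde}--\eqref{eq:entrance} is $\alpha\equiv0$, so $\mu_{\beta,0}=\delta_0$ for every $\beta$. The a.s.\ analytic dependence on $\lambda$ gives weak continuity at $\lambda=0$ as well.
\item Your Step 1 therefore applies on all of $(0,\infty)\times\R$ (or one-sidedly at the endpoint $0$ of $[0,\infty)$). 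It gives $\mu_{\beta_m,|\lambda_m|}\Rightarrow\delta_0$ whenever $(\beta_m,\lambda_m)\to(\beta,0)$.
\end{itemize}
You also need uniformity of the integrand in $\beta$, not just continuity ``in $(\beta,\lambda)$''. Concretely, $\|\mathcal G_{\beta_m}-\mathcal G_\beta\|_\infty\to0$. This holds because $(\gamma,x)\mapsto|1-\ee^{\ii x}|^\gamma$ is continuous on $[b,B]\times[0,2\pi]$ when $b>0$, together with periodicity. With it,
\[
 \int\mathcal G_{\beta_m}\,d\mu_{\beta_m,|\lambda_m|}
 -\int\mathcal G_\beta\,d\mu_{\beta,|\lambda|}
 =\int(\mathcal G_{\beta_m}-\mathcal G_\beta)\,d\mu_{\beta_m,|\lambda_m|}
 +\Bigl(\int\mathcal G_\beta\,d\mu_{\beta_m,|\lambda_m|}-\int\mathcal G_\beta\,d\mu_{\beta,|\lambda|}\Bigr)
 \to0 .
\]
At $\lambda=0$ the limit is $\mathcal G_\beta(0)=0$, so the diagonal is handled by exactly the same argument as the off-diagonal case.
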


The proof does not use estimates for $\partial_\lambda\rhoS_\beta$.  The terminal law of the Qu--Valk\'o diffusion is weakly continuous in each parameter separately, and the diffusion is almost surely increasing in the spatial parameter.  A monotone squeeze at the level of probability laws upgrades separate weak continuity to joint weak continuity.  The pair correlation is then the expectation of a bounded periodic observable that varies uniformly with $\beta$ on compact parameter sets.

Our second result concerns the even-beta formulas.  Put $N=2n$ and
\begin{equation}\label{eq:intro-Dn}
 D_n:=\frac{n^{2n}(n!)^3}{(2n)!(3n)!}.
\end{equation}
Let
\begin{equation}\label{eq:intro-weight}
 W_n(u):=\prod_{j=1}^{N}u_j^{-1+1/n}(1-u_j)^{-1+1/n}
 \prod_{1\le j<k\le N}|u_j-u_k|^{2/n},
 \qquad u\in[0,1]^N,
\end{equation}
and let
\[
 \mathcal S_n:=\int_{[0,1]^N}W_n(u)\,du.
\]
Define the centered trace transform
\begin{equation}\label{eq:intro-Gn}
 G_n(\lambda):=
 \frac{\ee^{-\ii n\lambda}}{\mathcal S_n}
 \int_{[0,1]^N}\ee^{\ii\lambda\sum_{j=1}^{N}u_j}W_n(u)\,du.
\end{equation}
Forrester's representation, in the normalization used by Qu--Valk\'o, is
\begin{equation}\label{eq:intro-forrester}
 4\pi^2\rhoS_{2n}(0,\lambda)=D_n\lambda^{2n}G_n(\lambda).
\end{equation}
On the other hand, Qu--Valk\'o define vectors $s_m\in\C^n$ by the matrix recursion recalled in Section~\ref{sec:even-prelim}, and obtain
\begin{equation}\label{eq:intro-qvseries}
 4\pi^2\rhoS_{2n}(0,\lambda)
 =2\sum_{j\ge1}v_n^Ts_{2j}\lambda^{2j}.
\end{equation}

\begin{theorem}[Direct equivalence]\label{thm:intro-equiv}
For every $n\ge1$ and every $\lambda\in\C$,
\begin{equation}\label{eq:intro-equivalence}
 2\sum_{j\ge1}v_n^Ts_{2j}\lambda^{2j}
 =D_n\lambda^{2n}G_n(\lambda).
\end{equation}
Consequently, the Qu--Valk\'o power series and the Forrester Selberg integral are identical as entire functions.
\end{theorem}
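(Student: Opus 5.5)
The plan follows the route sketched in the abstract: convert each side of \eqref{eq:intro-equivalence} into a solution of a linear ODE system, relate the two systems by an explicit intertwiner, and pin down the solution by its Frobenius exponent.

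\textbf{Step 1: the Qu--Valk\'o side.} Let $F(\lambda):=\sum_{m}s_m\lambda^m$. The matrix recursion for $s_m$ is equivalent to a first-order linear system $\lambda F'=A(\lambda)F$ with polynomial coefficients. Its indices are naturally indexed by a binomial/spin structure. Applying a Krawtchouk transform (the discrete Fourier-type change of basis diagonalizing the tridiagonal part) recasts it as a $(2n+1)$-dimensional system $\Dop_{-\eta}Y=0$ with $\eta=n+1$. In this system the observable $v_n^TF$ becomes a fixed coordinate (or a fixed linear functional) of $Y$. I would check that the even part $2\sum_j v_n^Ts_{2j}\lambda^{2j}$ corresponds to the combination $Y(\lambda)+Y(-\lambda)$. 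Equivalently, since the recursion presumably gives $v_n^Ts_{odd}=0$ or a reality symmetry, this even part is the full projected solution.

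\textbf{Step 2: the Selberg side.} Define the vector of centered Aomoto-type integrals
\[
\int e^{i\lambda(\sum u_j-n)}e_k(u)W_n(u)\,du,\qquad k=0,\dots,N.
\]
Aomoto's integration-by-parts identities, i.e.\ Stokes' theorem applied to $\partial_{u_j}$ of $u_j(1-u_j)\times$ the integrand, with vanishing boundary terms because the exponents $1/n>0$, give a closed $(2n+1)$-dimensional holonomic system $\Dop_{\eta}Z=0$ for $Z$, with parameter $+\eta=n+1$. Then $\lambda^{2n}G_n$ is $\lambda^{2n}$ times the first coordinate.

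\textbf{Step 3: the intertwiner.} I will exhibit a lower-triangular differential operator $T$, whose entries are polynomials in $\lambda$ and $\partial_\lambda$, satisfying $\Dop_{-\eta}T=\tilde T\,\Dop_\eta$ (up to conjugation by $\lambda^{2n}$). This reflects $\eta\mapsto-\eta$, so that $T$ maps solutions of the Selberg system to solutions of the Krawtchouk system, and the distinguished coordinate of $TZ$ equals $\lambda^{2n}Z_0$ times a constant. I would find $T$ by making an ansatz of degree $2n$ in $\partial_\lambda$, driven by the identity $\partial_\lambda^{\,k}$ acting on $e^{i\lambda\sum u}$, and verifying it generically in $\eta$. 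I expect this step to be the main obstacle: guessing and verifying $T$ in closed form for all $n$. If it resists, I would first compute it for $n=1,2$ symbolically to guess the pattern.

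\textbf{Step 4: matching.} The Frobenius exponents of the Krawtchouk system at $\lambda=0$ are $0,1,\dots$ with only one branch that is analytic and starts at order $\lambda^2$. This branch is pinned down because $s_0,s_1$ are given and the recursion determines everything from $s_2$. Both sides of \eqref{eq:intro-equivalence} are entire solutions lying in this branch: the left side by construction, and the right side via $T$, vanishing to order $2$ at $0$. Hence they are proportional. The constant is fixed by comparing the $\lambda^2$ coefficient, which is $2v_n^Ts_2$ on one side. On the other side it is $D_n$ times a centered moment of $\sum u_j$ under the Selberg measure, computable from Aomoto's formula; equality yields exactly $D_n=n^{2n}(n!)^3/((2n)!(3n)!)$. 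Both sides are entire, so the identity holds on all of $\C$, and coefficientwise identification of $v_n^Ts_{2j}$ with even centered moments follows.
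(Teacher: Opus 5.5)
Your outline follows the paper's architecture: a Krawtchouk transform to parameter $-(n+1)$, Aomoto integration by parts to $n+1$, a reflection $\eta\mapsto-\eta$, and Frobenius matching. However, the key construction is missing, and the normalization step fails as you state it.

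\textbf{The intertwiner and the setup.} You leave the reflection intertwiner as an ansatz to be guessed, but it is the core of the proof. In the paper it is not a higher-order differential operator. It is the explicit lower-triangular gauge $(\mathcal T_\eta K)_r=\lambda^{2\eta}\sum_{j\le r}\binom rj\bigl(-\ii/(n\lambda)\bigr)^{r-j}(\eta)^{\uparrow(r-j)}K_j$, checked by two binomial-ratio identities. For $\eta=n+1$ it produces an entire solution with first coordinate $\lambda^{2n+2}G_n$. It applies only once both systems are in the same basis:
\begin{itemize}
\item On the Selberg side you need the centered functions $e_r(u-\tfrac12)/\binom Nr$. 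Your $e_k(u)$ gives a system only conjugate to the $\eta=n+1$ member, so a further constant change of basis is needed.
\item On the Qu--Valk\'o side you need the single $(2n+1)$-vector $Q=(q_{-n},\dots,q_n)$ with $q_0\equiv1$ and $q_{-k}(\lambda):=q_k(-\lambda)$, followed by the gauge $M=\lambda^2SQ$.
\item The Krawtchouk matrix does not diagonalize the tridiagonal part. It sends that part to the lower-triangular $Y^{(-(n+1))}-2\Id$ and sends multiplication by $k$ to $-X$.
\item Your fallback $v_n^Ts_{\rm odd}=0$ is false: $s_m\in\ii^m\R^n$, and for $n=1$ one gets $v_1s_1=-\ii/6$.
\end{itemize}

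\textbf{The matching step.} The exponents of the $-(n+1)$ system are $\delta_r=r(r+1-N)/n$. These are $\le0$ for $r<N$ and equal $2$ only at $r=N$. That is what makes $(m+2)\Id-Y^{(-(n+1))}$ invertible for $m\ge1$ and the exponent-$2$ branch unique; the data $s_0,s_1$ of the $n$-dimensional recursion play no role. More seriously, you fix the constant by comparing the $\lambda^2$ coefficients of the two scalar sides. For $n\ge2$ both scalars vanish to order $2n$, so this compares $0$ with $0$. Describing the Selberg-side coefficient as ``$D_n$ times a moment'' also presupposes the constant you are trying to find.

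The order-$\lambda^2$ information sits in the last coordinate of the vector solutions, not the first:
\begin{itemize}
\item $S\mathbf1=(-1)^n\binom{2n}n^{-1}e_N$ gives $c_Q$.
\item The $j=0$ term of the last row of $\mathcal T_{n+1}$, with $K_0(0)=1$, gives $c_I=(-\ii/n)^{2n}(n+1)^{\uparrow 2n}=(-1)^n(3n)!/(n^{2n}n!)$.
\item Then $c_Q/c_I=D_n$.
\end{itemize}
A scalar comparison would have to be made at order $\lambda^{2n}$. That requires evaluating $2v_n^Ts_{2n}$ from the recursion, which in the paper is a consequence of the theorem, not an input.
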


The proof is finite-dimensional.  We transform both descriptions into members of one family of first-order systems
\begin{equation}\label{eq:intro-family}
 \Dop K=(Y^{(\eta)}-\ii\lambda X)K,
 \qquad \Dop:=\lambda\frac{d}{d\lambda},
\end{equation}
with explicit $(2n+1)\times(2n+1)$ matrices $X$ and $Y^{(\eta)}$.  The Qu--Valk\'o side gives $\eta=-(n+1)$ after a Krawtchouk transform, while the centered Selberg trace gives $\eta=n+1$.  An explicit triangular transform intertwines the systems at $\eta$ and $-\eta$.  A one-dimensional Frobenius matching then determines the proportionality constant, which is exactly $D_n$.

Section~\ref{sec:prelim} recalls the ingredients from~\cite{QuValko}.  Section~\ref{sec:continuity} proves Theorem~\ref{thm:intro-cont}.  Sections~\ref{sec:equiv-start}--\ref{sec:frob} prove Theorem~\ref{thm:intro-equiv}; the Selberg differential system is derived directly by integration by parts.  Section~\ref{sec:coeff} records a coefficientwise consequence.

\section{Preliminaries from the Qu--Valk\'o representation}\label{sec:prelim}

\subsection{The diffusion and the Palm density}

For $\beta>0$ and $\lambda\in\R$, let $\alpha_{\lambda,\beta}$ denote the strong solution used in~\cite{QuValko} after specializing the Jacobi parameter to $\delta=\beta/2$:
\begin{equation}\label{eq:sde}
 d\alpha_{\lambda,\beta}(u)
 =\lambda\frac{\beta}{4}\ee^{\beta u/4}\,du
 -\frac{\beta}{2}\sin\bigl(\alpha_{\lambda,\beta}(u)\bigr)\,du
 +\Re\!\left[(\ee^{-\ii\alpha_{\lambda,\beta}(u)}-1)dZ(u)\right],
\end{equation}
with entrance condition
\begin{equation}\label{eq:entrance}
 \lim_{u\to-\infty}\alpha_{\lambda,\beta}(u)=0.
\end{equation}
Set
\begin{equation}\label{eq:law-def}
 A_{\beta,\lambda}:=\alpha_{\lambda,\beta}(0),
 \qquad
 \mu_{\beta,\lambda}:=\Law(A_{\beta,\lambda}).
\end{equation}
Qu and Valk\'o prove that for fixed $\beta$ the map $\lambda\mapsto A_{\beta,\lambda}$ can be realized almost surely as an analytic, strictly increasing function, and they construct a common realization in which $\beta\mapsto A_{\beta,\lambda}$ is almost surely continuous for fixed $\lambda$; see~\cite[Propositions~6 and~21]{QuValko}.

For $\delta>0$, introduce the $2\pi$-periodic density
\begin{equation}\label{eq:hdelta}
 h_\delta(x)
 =\frac{1}{2\pi}\frac{\Gamma(1+\delta)^2}{\Gamma(1+2\delta)}
 |1-\ee^{\ii x}|^{2\delta}.
\end{equation}
Its absolutely convergent Fourier expansion is
\begin{equation}\label{eq:hfourier}
 h_\delta(x)
 =\frac1{2\pi}+\frac1\pi\sum_{k\ge1}
 \frac{(-\delta)^{\uparrow k}}{(1+\delta)^{\uparrow k}}\cos(kx),
\end{equation}
where $a^{\uparrow k}=a(a+1)\cdots(a+k-1)$.  The Palm identification in~\cite{QuValko} implies
\begin{equation}\label{eq:palm-expectation}
 \rhoS_\beta(0,\lambda)
 =\frac1{2\pi}\E\bigl[h_{\beta/2}(A_{\beta,|\lambda|})\bigr],
 \qquad \lambda\ne0.
\end{equation}
Equivalently, with
\begin{equation}\label{eq:Gbeta}
 \mathcal G_\beta(x)
 :=\frac{\Gamma(1+\beta/2)^2}{4\pi^2\Gamma(1+\beta)}
 |1-\ee^{\ii x}|^\beta,
\end{equation}
we have
\begin{equation}\label{eq:palm-G}
 \rhoS_\beta(0,\lambda)
 =\int_\R \mathcal G_\beta(x)\,\mu_{\beta,|\lambda|}(dx).
\end{equation}

\subsection{The even-beta ODE and power series}\label{sec:even-prelim}

Fix $n\ge1$ and set $\beta=2n$.  Let $A_n,B_n\in\R^{n\times n}$ be given by
\begin{align}
 [A_n]_{k,k}&=-k^2,&
 [A_n]_{k,k-1}&=\frac12k(k+n),&
 [A_n]_{k,k+1}&=\frac12k(k-n),\label{eq:AnBn}\
 [B_n]_{k,k}&=k,
\end{align}
with out-of-range entries omitted.  Let
\begin{equation}\label{eq:envn}
 e_n=(1,0,\ldots,0)^T,
 \qquad
 [v_n]_k=(-1)^k\frac{\binom{2n}{n+k}}{\binom{2n}{n}},
 \quad 1\le k\le n.
\end{equation}
Qu and Valk\'o define
\begin{equation}\label{eq:sm-rec}
 s_0=-\frac{n+1}{2}A_n^{-1}e_n,
 \qquad
 s_m=\ii\left(m\Id-\frac2nA_n\right)^{-1}B_ns_{m-1},
 \quad m\ge1,
\end{equation}
and prove the entire power-series representation
\begin{equation}\label{eq:qv-series}
 4\pi^2\rhoS_{2n}(0,\lambda)
 =2\sum_{j\ge1}v_n^Ts_{2j}\lambda^{2j}.
\end{equation}
They also show that if
\[
 q_k(\lambda):=\E\ee^{\ii k\alpha_{\lambda,2n}(0)},
 \qquad q=(q_1,\ldots,q_n)^T,
\]
then
\begin{equation}\label{eq:q-ode-matrix}
 \frac n2\lambda q'(\lambda)
 =\left(\ii\frac n2\lambda B_n+A_n\right)q(\lambda)
 +\frac{n+1}{2}e_n,
 \qquad q(0)=\mathbf 1.
\end{equation}

Forrester's Selberg representation, in the form recorded in~\cite{QuValko}, is exactly~\eqref{eq:intro-forrester}, with $G_n$ given by~\eqref{eq:intro-Gn}.

\section{Joint continuity for all \texorpdfstring{$\beta>0$}{beta > 0}}\label{sec:continuity}

We first isolate the elementary topological input.

\begin{lemma}[Separate weak continuity and stochastic monotonicity]\label{lem:monotone}
Let $I,J\subset\R$ be intervals and let $\{\nu_{s,t}:(s,t)\in I\times J\}$ be probability measures on $\R$.  Assume:
\begin{enumerate}[label=(\roman*)]
\item for each fixed $t$, $s\mapsto\nu_{s,t}$ is weakly continuous;
\item for each fixed $s$, $t\mapsto\nu_{s,t}$ is weakly continuous;
\item for each fixed $s$ and $t_1\le t_2$,
\[
 \nu_{s,t_1}\le_{\rm st}\nu_{s,t_2}.
\]
\end{enumerate}
Then $(s,t)\mapsto\nu_{s,t}$ is jointly weakly continuous.
\end{lemma}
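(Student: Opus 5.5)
We will prove joint weak continuity by working with distribution functions. Write $F_{s,t}(x):=\nu_{s,t}((-\infty,x])$. Weak convergence on $\R$ is equivalent to convergence of distribution functions at every continuity point of the limit. Stochastic order (iii) means $F_{s,t_2}\le F_{s,t_1}$ pointwise when $t_1\le t_2$. Fix $(s_0,t_0)\in I\times J$ and a sequence $(s_k,t_k)\to(s_0,t_0)$. We must show $F_{s_k,t_k}(x)\to F_{s_0,t_0}(x)$ at every continuity point $x$ of $F_{s_0,t_0}$.

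The plan is a monotone squeeze in $t$, combined with continuity in $s$ at the two bracketing values of $t$, and then continuity in $t$ at $s_0$.

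\textbf{Step 1 (bracketing).} Fix $\varepsilon>0$ and a continuity point $x$ of $F_{s_0,t_0}$. By (ii), $F_{s_0,t}(y)\to F_{s_0,t_0}(y)$ as $t\to t_0$ at continuity points $y$ of $F_{s_0,t_0}$. If $t_0$ is an endpoint of $J$, only one side is needed and the other bracket is $t_0$ itself.

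Pick $t_-<t_0<t_+$ in $J$ and points $x'<x<x''$ such that:
\begin{itemize}
\item $x'$ and $x''$ are continuity points of $F_{s_0,t_0}$, $F_{s_0,t_\pm}$, and also of $F_{s_0,t_0}$ for the limit statements used below; only countably many points are excluded, so such choices exist;
\item $F_{s_0,t_0}(x'')-F_{s_0,t_0}(x')<\varepsilon$, using continuity of $F_{s_0,t_0}$ at $x$ and right-continuity;
\item $|F_{s_0,t_\pm}(y)-F_{s_0,t_0}(y)|<\varepsilon$ for $y\in\{x',x''\}$, using (ii).
\end{itemize}

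\textbf{Step 2 (squeeze).} For $k$ large we have $t_-\le t_k\le t_+$. Monotonicity of the distribution function in $x$ and in $t$ gives
\[
 F_{s_k,t_+}(x')\le F_{s_k,t_k}(x)\le F_{s_k,t_-}(x'').
\]
Using the sandwich points $x',x''$ rather than $x$ itself ensures we only evaluate at continuity points of the limiting laws. By (i) at the fixed values $t_\pm$, we have $F_{s_k,t_\pm}(y)\to F_{s_0,t_\pm}(y)$ for $y\in\{x',x''\}$. Hence
\[
 F_{s_0,t_0}(x)-3\varepsilon\le\liminf_k F_{s_k,t_k}(x)\le\limsup_k F_{s_k,t_k}(x)\le F_{s_0,t_0}(x)+3\varepsilon .
\]
Since $\varepsilon$ is arbitrary, the distribution functions converge at $x$, and this gives joint weak continuity.

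The only delicate point is bookkeeping of continuity points. Step 1 needs $x'$ and $x''$ to be simultaneously continuity points of the three laws $\nu_{s_0,t_-}$, $\nu_{s_0,t_0}$, $\nu_{s_0,t_+}$. This is possible because each has only countably many atoms. Tightness is never needed separately, since convergence of distribution functions at continuity points to a genuine distribution function already implies weak convergence.
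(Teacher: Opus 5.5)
Your proof is correct and is the same monotone squeeze in $t$ as the paper's. The only difference is that you sandwich distribution functions directly, whereas the paper sandwiches $\int f\,d\nu_{s,t}$ for bounded continuous nondecreasing $f$; that quantity is genuinely separately continuous, so atoms enter only in the paper's final convergence-determining step. The one point to tidy is the order of choices in Step 1, since the atoms to avoid depend on $t_\pm$ while $t_\pm$ is chosen from (ii) depending on $x',x''$. Fix $x'<x<x''$ first, then $t_\pm$, and then move $x',x''$ slightly inward to points $\tilde x',\tilde x''$ avoiding the atoms of $\nu_{s_0,t_\pm}$. This is harmless because you only use $F_{s_0,t_+}(\tilde x')\ge F_{s_0,t_+}(x')$ and $F_{s_0,t_-}(\tilde x'')\le F_{s_0,t_-}(x'')$.
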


\begin{proof}
Fix $(s,t)$ and $(s_m,t_m)\to(s,t)$.  If $f$ is bounded, continuous and nondecreasing, set
\[
 H_f(a,b):=\int f\,d\nu_{a,b}.
\]
The function $H_f$ is separately continuous and nondecreasing in its second variable.  If $t$ is an interior point of $J$, then for every sufficiently small $\varepsilon>0$ and all large $m$,
\[
 H_f(s_m,t-\varepsilon)\le H_f(s_m,t_m)\le H_f(s_m,t+\varepsilon).
\]
First let $m\to\infty$ and then $\varepsilon\downarrow0$.  Separate continuity gives
\begin{equation}\label{eq:increasing-test}
 \int f\,d\nu_{s_m,t_m}\longrightarrow\int f\,d\nu_{s,t}.
\end{equation}
At an endpoint of $J$ the same argument is one-sided.

It remains to note that bounded continuous nondecreasing functions are convergence determining on $\R$.  Indeed, if $x$ is a continuity point of the distribution function of $\nu_{s,t}$, approximate $\mathbf1_{(x,\infty)}$ from below and above by continuous nondecreasing functions whose transition intervals shrink to $x$.  Equation~\eqref{eq:increasing-test} then gives convergence of the tail probabilities at every such $x$, hence weak convergence.
\end{proof}

\begin{proposition}\label{prop:joint-law}
The map
\[
 (\beta,\lambda)\longmapsto\mu_{\beta,\lambda}
\]
is jointly weakly continuous on $(0,\infty)\times\R$.
\end{proposition}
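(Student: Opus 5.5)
We apply Lemma~\ref{lem:monotone} with $s=\beta\in I=(0,\infty)$, $t=\lambda\in J=\R$ and $\nu_{s,t}=\mu_{\beta,\lambda}$. This requires three inputs from~\cite{QuValko}: weak continuity in $\beta$ for fixed $\lambda$, weak continuity in $\lambda$ for fixed $\beta$, and stochastic monotonicity in $\lambda$ for fixed $\beta$. Once these are verified, the lemma yields joint weak continuity directly.

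\emph{Continuity in $\beta$.} Fix $\lambda$. By~\cite[Proposition~21]{QuValko} there is a common realization in which $\beta\mapsto A_{\beta,\lambda}$ is almost surely continuous. Hence $\beta_m\to\beta$ gives $A_{\beta_m,\lambda}\to A_{\beta,\lambda}$ almost surely. By dominated convergence, $\E f(A_{\beta_m,\lambda})\to\E f(A_{\beta,\lambda})$ for every bounded continuous $f$. This is hypothesis~(i). Only the laws matter here, so using this particular coupling is harmless.

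\emph{Continuity and monotonicity in $\lambda$.} Fix $\beta$. By~\cite[Proposition~6]{QuValko}, $\lambda\mapsto A_{\beta,\lambda}$ can be realized almost surely as an analytic, strictly increasing function. Continuity of the paths gives hypothesis~(ii), again by dominated convergence. Pathwise monotonicity gives, for $\lambda_1\le\lambda_2$ and every nondecreasing bounded $f$, the inequality $\E f(A_{\beta,\lambda_1})\le\E f(A_{\beta,\lambda_2})$. This is $\mu_{\beta,\lambda_1}\le_{\rm st}\mu_{\beta,\lambda_2}$, which is hypothesis~(iii).

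\emph{Main obstacle: the parameter range.} The step needing care is confirming that these statements hold on all of $\R$ in $\lambda$, including $\lambda\le 0$, and not only on $\lambda>0$. For negative $\lambda$ the SDE~\eqref{eq:sde} is still well posed with the entrance condition~\eqref{eq:entrance}. At $\lambda=0$ the solution is identically $0$, since the drift and the noise coefficient both vanish at $\alpha=0$; so $\mu_{\beta,0}=\delta_0$. The pathwise increasing realization of~\cite[Proposition~6]{QuValko} covers all real $\lambda$, which I would take as given.

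If only $\lambda\ge0$ were available, I would instead use the reflection symmetry $\alpha_{-\lambda,\beta}\overset{d}{=}-\alpha_{\lambda,\beta}$. It follows from replacing $Z$ by $\bar Z$, which preserves the noise law, together with the oddness of $\sin$. Under this symmetry, continuity and monotonicity on $\lambda\ge0$ transfer to $\lambda\le0$. Continuity across $\lambda=0$ then follows from the one-sided argument of Lemma~\ref{lem:monotone} at the endpoint on each side.

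Combining (i)--(iii), Lemma~\ref{lem:monotone} gives joint weak continuity on $(0,\infty)\times\R$.
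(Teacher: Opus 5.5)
Your proposal is correct and is the paper's argument: it applies Lemma~\ref{lem:monotone} with $s=\beta$ and $t=\lambda$, and takes continuity in $\beta$ from \cite[Proposition~21]{QuValko} and continuity and monotonicity in $\lambda$ from \cite[Proposition~6]{QuValko}. The paper takes Proposition~6 to cover all real $\lambda$ without comment; your reflection fallback for $\lambda\le0$ is a sound extra safeguard, with one small correction: the equation for $-\alpha$ is driven by $-\bar Z$, not $\bar Z$, though both have the law of $Z$.
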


\begin{proof}
For fixed $\lambda$, the almost sure continuity in $\beta$ from~\cite[Proposition~21]{QuValko} implies weak continuity of $\beta\mapsto\mu_{\beta,\lambda}$.  For fixed $\beta$, the almost sure analyticity in $\lambda$ implies weak continuity in $\lambda$.  Finally, the almost sure monotonicity in~\cite[Proposition~6]{QuValko} gives
\[
 \lambda_1\le\lambda_2
 \quad\Longrightarrow\quad
 \mu_{\beta,\lambda_1}\le_{\rm st}\mu_{\beta,\lambda_2}.
\]
Lemma~\ref{lem:monotone} applies.
\end{proof}

\begin{proof}[Proof of Theorem~\ref{thm:intro-cont}]
Let $(\beta_m,\lambda_m)\to(\beta,\lambda)$ with $\beta>0$.  Proposition~\ref{prop:joint-law} gives
\[
 \mu_{\beta_m,|\lambda_m|}\Rightarrow\mu_{\beta,|\lambda|}.
\]
Choose $0<b<\beta<B<\infty$ so that $\beta_m\in[b,B]$ for all sufficiently large $m$.  The function
\[
 (\gamma,x)\longmapsto\mathcal G_\gamma(x)
\]
is continuous on $[b,B]\times[0,2\pi]$.  At the zeros of $|1-\ee^{\ii x}|$, the positivity of the lower exponent bound $b$ gives continuity uniformly in $\gamma\in[b,B]$.  By periodicity,
\begin{equation}\label{eq:G-uniform}
 \|\mathcal G_{\beta_m}-\mathcal G_\beta\|_\infty\longrightarrow0.
\end{equation}
Since $\mathcal G_\beta$ is bounded and continuous,
\[
 \int\mathcal G_\beta\,d\mu_{\beta_m,|\lambda_m|}
 \longrightarrow
 \int\mathcal G_\beta\,d\mu_{\beta,|\lambda|}.
\]
Together with~\eqref{eq:G-uniform} and~\eqref{eq:palm-G}, this proves joint continuity whenever $\lambda\ne0$.

For $\lambda=0$, the solution of~\eqref{eq:sde} is identically zero, so $\mu_{\beta,0}=\delta_0$.  Because $\mathcal G_\beta(0)=0$, the same argument yields
\[
 \lim_{(\gamma,t)\to(\beta,0)}\rhoS_\gamma(0,t)=0.
\]
Thus the representation extends jointly continuously with value $0$ on the diagonal.
\end{proof}

\begin{corollary}\label{cor:uniform-small}
For every $0<b<B<\infty$,
\[
 \lim_{\varepsilon\downarrow0}
 \sup_{\substack{b\le\beta\le B\\|\lambda|\le\varepsilon}}
 \rhoS_\beta(0,\lambda)=0.
\]
\end{corollary}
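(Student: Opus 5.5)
The corollary follows from Theorem~\ref{thm:intro-cont} by compactness.

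We argue by contradiction. Suppose the limit fails. Then there exist $\delta>0$ and sequences $\beta_m\in[b,B]$ and $\lambda_m$ with $|\lambda_m|\le 1/m$ such that
\[
\rhoS_{\beta_m}(0,\lambda_m)\ge\delta .
\]
Since $[b,B]$ is compact, we may pass to a subsequence along which $\beta_m\to\beta\in[b,B]\subset(0,\infty)$. By construction $\lambda_m\to0$. Theorem~\ref{thm:intro-cont} gives joint continuity of the extension at $(\beta,0)$, with value $\rhoS_\beta(0,0)=0$. Hence $\rhoS_{\beta_m}(0,\lambda_m)\to0$, which contradicts the lower bound $\delta$.

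Two small points need checking.

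\emph{Points with $\lambda=0$.} The supremum includes $\lambda=0$. There the value is the continuous extension, which is $0$ by Theorem~\ref{thm:intro-cont}. These points therefore do not affect the supremum, and the sequence argument covers them as well.

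\emph{Monotonicity in $\varepsilon$.} The supremum is nonnegative, because $\mathcal G_\beta\ge0$ and so $\rhoS_\beta\ge0$. It is also nonincreasing as $\varepsilon\downarrow0$. So the limit exists, and the contradiction argument shows it equals $0$.

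An equivalent route avoids sequences. The extended function is jointly continuous on the compact set $[b,B]\times[-1,1]$, hence uniformly continuous there. It vanishes on $[b,B]\times\{0\}$, and the uniform smallness follows directly.

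There is no real obstacle here. The only point requiring care is that the compact interval $[b,B]$ stays inside the open region $(0,\infty)$, so that Theorem~\ref{thm:intro-cont} applies at every limit point $\beta$.
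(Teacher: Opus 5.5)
Your proof is correct and is essentially the paper's argument. The paper uses uniform continuity of the jointly continuous extension on the compact set $[b,B]\times[-1,1]$ together with its vanishing on $[b,B]\times\{0\}$. That is exactly your ``equivalent route''; your sequential-compactness contradiction argument is just the unpacked version of it.
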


\begin{proof}
Theorem~\ref{thm:intro-cont} gives uniform continuity on the compact set $[b,B]\times[-1,1]$, and the continuous version vanishes on $[b,B]\times\{0\}$.
\end{proof}

\begin{remark}
The restriction $\beta>2$ in the derivative-based argument of~\cite{QuValko} enters through summability of bounds for the derivatives of the Fourier moments.  Proposition~\ref{prop:joint-law} uses monotonicity before the observable $\mathcal G_\beta$ is applied, so no differentiated Fourier series is needed.
\end{remark}

\section{A common differential system for the even-beta formulas}\label{sec:equiv-start}

From now on fix $n\ge1$ and put $N=2n$.  For $\eta\in\C$, define matrices $X,Y^{(\eta)}\in\C^{(N+1)\times(N+1)}$, indexed by $r=0,\ldots,N$, by
\begin{align}
 X_{r,r-1}&=-\frac r4,
 &X_{r,r+1}&=-(N-r),\label{eq:Xdef}\\
 Y^{(\eta)}_{r,r}&=-\frac r n(\eta+3n-r),
 &Y^{(\eta)}_{r,r-2}&=-\frac{r(r-1)}{4n},\label{eq:Ydef}
\end{align}
with all out-of-range entries interpreted as zero.  We write
\[
 \Dop:=\lambda\frac d{d\lambda}.
\]
Our common system is
\begin{equation}\label{eq:eta-system}
 \Dop K=(Y^{(\eta)}-\ii\lambda X)K.
\end{equation}

\subsection{The Qu--Valk\'o side and a Krawtchouk transform}

Writing~\eqref{eq:q-ode-matrix} componentwise and absorbing the inhomogeneous term by $q_0\equiv1$ gives
\begin{equation}\label{eq:q-positive}
 \Dop q_k
 =\frac{k(k+n)}nq_{k-1}-\frac{2k^2}{n}q_k
 +\frac{k(k-n)}nq_{k+1}+\ii k\lambda q_k,
 \qquad 1\le k\le n.
\end{equation}
Define
\[
 q_{-k}(\lambda):=q_k(-\lambda),\qquad 1\le k\le n.
\]
Then~\eqref{eq:q-positive} extends to the closed system
\begin{equation}\label{eq:q-full}
 \Dop q_k
 =\frac{k(k+n)}nq_{k-1}-\frac{2k^2}{n}q_k
 +\frac{k(k-n)}nq_{k+1}+\ii k\lambda q_k,
 \qquad -n\le k\le n,
\end{equation}
where the boundary terms at $k=\pm n$ are absent.

Set
\begin{equation}\label{eq:wk}
 w_k:=(-1)^k\frac{\binom{N}{n+k}}{\binom{N}{n}},
 \qquad -n\le k\le n.
\end{equation}
For real $\lambda$, $q_{-k}(\lambda)=\overline{q_k(\lambda)}$, and the Qu--Valk\'o finite Fourier sum becomes
\begin{equation}\label{eq:Fq}
 F_n(\lambda):=4\pi^2\rhoS_{2n}(0,\lambda)
 =\sum_{k=-n}^{n}w_kq_k(\lambda).
\end{equation}
Both sides extend to the same entire even function.

Introduce polynomials $P_r(k)$, $0\le r\le N$, by
\begin{equation}\label{eq:kraw-gen}
 (1+t/2)^{n+k}(1-t/2)^{n-k}
 =\sum_{r=0}^{N}\binom NrP_r(k)t^r.
\end{equation}
Comparison of coefficients gives
\begin{equation}\label{eq:kraw-rec}
 (N-r)P_{r+1}(k)=kP_r(k)-\frac r4P_{r-1}(k).
\end{equation}
Let $S$ be the $(N+1)\times(N+1)$ matrix, with columns indexed by $k=-n,\ldots,n$, defined by
\begin{equation}\label{eq:Sdef}
 S_{r,k}:=w_kP_r(k).
\end{equation}
Since $P_r$ has degree $r$, the matrix $S$ is invertible.

Let $Q=(q_{-n},\ldots,q_n)^T$, let $K_0=\operatorname{diag}(-n,\ldots,n)$, and let $C$ denote the constant tridiagonal part of~\eqref{eq:q-full}, so that
\begin{equation}\label{eq:Qmatrix}
 \Dop Q=(C+\ii\lambda K_0)Q.
\end{equation}

\begin{lemma}[Krawtchouk conjugation]\label{lem:kraw}
The matrix $S$ satisfies
\begin{equation}\label{eq:kraw-conj}
 SK_0=-XS,
 \qquad
 SC=(Y^{(-(n+1))}-2\Id)S.
\end{equation}
Moreover,
\begin{equation}\label{eq:S-one}
 S\mathbf1=\frac{(-1)^n}{\binom{2n}{n}}e_N,
\end{equation}
where $e_N$ is the last coordinate vector of $\C^{N+1}$.
\end{lemma}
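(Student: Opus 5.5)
The plan is to verify all three identities through the generating function
\[
 g_k(t):=(1+t/2)^{n+k}(1-t/2)^{n-k}=\sum_{r=0}^N\binom Nr P_r(k)t^r .
\]
Multiplying by $w_k$ gives $w_kg_k(t)=\sum_r\binom Nr S_{r,k}t^r$. So any identity between rows of $S$ becomes an identity between power series in $t$, checked coefficientwise.

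\emph{First identity and the normalization.} The $(r,k)$ entry of $SK_0$ is $kw_kP_r(k)$. Since $X$ has only the two off-diagonal entries $X_{r,r-1}=-r/4$ and $X_{r,r+1}=-(N-r)$, the $(r,k)$ entry of $-XS$ is
\[
 w_k\Bigl(\tfrac r4P_{r-1}(k)+(N-r)P_{r+1}(k)\Bigr).
\]
Hence $SK_0=-XS$ is exactly the recurrence~\eqref{eq:kraw-rec} multiplied by $w_k$. That recurrence comes from comparing coefficients in $\partial_t g_k$, because
\[
 (1-t^2/4)\,\partial_tg_k=\bigl(k+\tfrac{n}{2}\cdot 0\bigr)g_k+\tfrac t4\bigl((n+k)(\cdots)\bigr)\!,
\]
that is, from the logarithmic derivative of $g_k$. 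For~\eqref{eq:S-one}, sum over $k$ by the binomial theorem:
\[
 \sum_kw_kg_k(t)=\binom Nn^{-1}\sum_k(-1)^k\binom N{n+k}(1+t/2)^{n+k}(1-t/2)^{n-k}
 =\frac{(-1)^n}{\binom Nn}\bigl((1-t/2)-(1+t/2)\bigr)^N=\frac{(-1)^n}{\binom Nn}t^N .
\]
Only the $t^N$ coefficient survives, and $\binom NN=1$, which gives~\eqref{eq:S-one}.

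\emph{Second identity.} I first transfer the tridiagonal matrix $C$ onto the column index. The entry $(SC)_{r,k}$ equals $\sum_jS_{r,j}C_{j,k}$, and it receives contributions only from $j=k,k\pm1$. Using the ratios
\[
 \frac{w_{k+1}}{w_k}=-\frac{n-k}{n+k+1},\qquad \frac{w_{k-1}}{w_k}=-\frac{n+k}{n-k+1},
\]
I expect
\[
 (SC)_{r,k}=\frac{w_k}{n}\Bigl[-(k+1)(n-k)P_r(k+1)-2k^2P_r(k)+(k-1)(n+k)P_r(k-1)\Bigr].
\]
The factors $(n-k)$ and $(n+k)$ vanish at $k=n$ and $k=-n$ respectively. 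This is consistent with the missing boundary terms, so no out-of-range values $P_r(\pm(n+1))$ arise.

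Next I apply this difference operator $L$ to $g_k(t)$ and rewrite the result in $t$. The shifts act by $g_{k\pm1}=\rho^{\pm1}g_k$ with $\rho=(1+t/2)/(1-t/2)$. Multiplication by $k$ is expressed through
\[
 t\,\partial_tg_k=\Bigl(\frac{(n+k)t/2}{1+t/2}-\frac{(n-k)t/2}{1-t/2}\Bigr)g_k .
\]
Applying this twice handles the quadratic terms $k^2$ and $k(n\pm k)$. The aim is to show that $nL g_k$ equals a second-order operator of the form
\[
 a(t)\,\partial_t^2+b(t)\,\partial_t+c(t)
\]
applied to $g_k$, with polynomial coefficients and independent of $k$.

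On the other side, $\sum_r\binom Nr t^r\bigl((Y^{(-(n+1))}-2\Id)P\bigr)_r$ translates into such an operator as well. The diagonal entry $-\frac rn(2n-1-r)-2$ corresponds to an Euler operator in $t\partial_t$, and the entry $Y_{r,r-2}$, after reindexing $\binom Nr\to\binom N{r-2}$, corresponds to a $t^2$-multiplication combined with $t\partial_t$ factors. Comparing the two operators coefficientwise finishes the proof.

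The main obstacle is this last comparison: the bookkeeping between the difference operator in $k$ and the differential operator in $t$. If the direct ODE comparison becomes messy, I would fall back on checking the identity $\sum_r\binom Nr t^r[(SC)_{r,k}-((Y-2\Id)S)_{r,k}]=0$ after dividing by $w_kg_k$. This reduces it to a rational-function identity in $t$ and $k$, which can be verified by clearing the denominator $(1-t^2/4)^2$ and matching polynomial coefficients in $k$, which has degree at most two.
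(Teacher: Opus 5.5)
Your proposal is correct and follows the paper's proof essentially step for step: $SK_0=-XS$ is the recurrence \eqref{eq:kraw-rec} multiplied by $w_k$, the weight ratios move $C$ onto the column index to give \eqref{eq:kraw-diff}, the generating function turns the $Y^{(-(n+1))}-2\Id$ side into a $k$-independent second-order operator in $t$, and the binomial theorem gives $S\mathbf 1$. The comparison you defer does close: the right-hand side corresponds to the operator $\theta^2+(1-N)\theta-N-\frac{t^2}{4}(N-\theta)(N-\theta-1)$ with $\theta=t\partial_t$, and applying it to $g_k$ using $(1-t^2/4)\,\partial_t g_k=(k-\tfrac{nt}{2})\,g_k$ (your garbled logarithmic-derivative display should read this way) reproduces the generating function of the bracket in your formula for $(SC)_{r,k}$.
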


\begin{proof}
The first identity in~\eqref{eq:kraw-conj} is exactly~\eqref{eq:kraw-rec}.  For the second, the ratios
\[
 \frac{w_{k+1}}{w_k}=-\frac{n-k}{n+k+1},
 \qquad
 \frac{w_{k-1}}{w_k}=-\frac{n+k}{n-k+1}
\]
give
\begin{align}
 \frac{(SC)_{r,k}}{w_k}
 =\frac1n\Big(& (n+k)(k-1)P_r(k-1)-2k^2P_r(k)\notag\\
 &-(n-k)(k+1)P_r(k+1)\Big).\label{eq:kraw-diff}
\end{align}
The generating function~\eqref{eq:kraw-gen} yields
\begin{align}
 &(n+k)(k-1)P_r(k-1)-2k^2P_r(k)-(n-k)(k+1)P_r(k+1)\notag\\
 &\qquad=\bigl(r(r+1-N)-N\bigr)P_r(k)-\frac{r(r-1)}4P_{r-2}(k).\label{eq:kraw-identity}
\end{align}
Indeed, after multiplying by $\binom Nr t^r$ and summing in $r$, the right-hand side is obtained by applying
\[
 \theta^2+(1-N)\theta-N-\frac{t^2}{4}(N-\theta)(N-\theta-1),
 \qquad \theta=t\partial_t,
\]
to~\eqref{eq:kraw-gen}; direct differentiation produces the generating function of the left-hand side.  Since
\[
 Y^{(-(n+1))}_{r,r}=\frac{r(r+1-N)}n,
 \qquad
 Y^{(-(n+1))}_{r,r-2}=-\frac{r(r-1)}{4n},
\]
identity~\eqref{eq:kraw-identity} proves the second relation in~\eqref{eq:kraw-conj}.

Finally, summing~\eqref{eq:kraw-gen} against $w_k$ gives
\begin{align*}
 \sum_{r=0}^{N}\binom Nr(S\mathbf1)_rt^r
 &=\frac{(-1)^n}{\binom Nn}
 \sum_{j=0}^{N}(-1)^j\binom Nj(1+t/2)^j(1-t/2)^{N-j}\\
 &=\frac{(-1)^n}{\binom Nn}t^N,
\end{align*}
which is~\eqref{eq:S-one}.
\end{proof}

Set
\begin{equation}\label{eq:Mdef}
 L:=SQ,
 \qquad
 M:=\lambda^2L.
\end{equation}
Lemma~\ref{lem:kraw} gives
\begin{equation}\label{eq:Msystem}
 \Dop M=(Y^{(-(n+1))}-\ii\lambda X)M.
\end{equation}
The first row of $S$ is $(w_k)_{k=-n}^{n}$, hence
\begin{equation}\label{eq:Mfirst}
 M_0(\lambda)=\lambda^2F_n(\lambda).
\end{equation}
Since $Q(0)=\mathbf1$, we also have
\begin{equation}\label{eq:Masymp}
 M(\lambda)=c_Q\lambda^2e_N+O(\lambda^3),
 \qquad
 c_Q:=\frac{(-1)^n}{\binom{2n}{n}}.
\end{equation}

\section{The centered Selberg trace system}\label{sec:selberg-system}

We next show directly that the Selberg integral belongs to the same family~\eqref{eq:eta-system}, now with parameter $n+1$.  The recurrence is a specialization of the Aomoto--Selberg differential system; related trace recurrences are developed in~\cite{Aomoto,ForresterKumar}.  We include the derivation to make the parameter normalization explicit.

For $p=0,\ldots,N$, define
\begin{equation}\label{eq:Hdef}
 \widehat H_p(x):=
 \frac{1}{\binom Np\,\mathcal S_n}
 \int_{[0,1]^N}W_n(u)\ee^{-x\sum_j u_j}
 e_p(1-u_1,\ldots,1-u_N)\,du,
\end{equation}
where $e_p$ is the elementary symmetric polynomial.  Put $\widehat H_{-1}=\widehat H_{N+1}=0$.

\begin{lemma}[Jacobi trace differential recurrence]\label{lem:Hrec}
For $0\le p\le N$,
\begin{equation}\label{eq:Hrec}
 (N-p)x\widehat H_{p+1}
 =\bigl((N-p)x+B_p\bigr)\widehat H_p
 +x\widehat H_p'-D_p\widehat H_{p-1},
\end{equation}
where
\begin{equation}\label{eq:BD}
 B_p=\frac{p(4n+1-p)}n,
 \qquad
 D_p=\frac{p(2n-p+1)}n.
\end{equation}
\end{lemma}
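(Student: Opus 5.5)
The plan is to derive \eqref{eq:Hrec} by an Aomoto-type integration by parts in the variables $v_j:=1-u_j$, exploiting the fact that the derivative in $x$ brings down $-\sum_j u_j=-(N-e_1(v))$. Write $e_p=e_p(v)$ throughout. Differentiating \eqref{eq:Hdef} under the integral gives
\[
 \binom Np\mathcal S_n\,\widehat H_p'(x)=\int W_n\,\ee^{-x\sum u}\bigl(e_1e_p-Ne_p\bigr)\,du .
\]
So it suffices to express $\int W_n\ee^{-x\sum u}e_1e_p$ through the integrals of $e_{p-1}$, $e_p$ and $e_{p+1}$.

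For this I would use the vector field $F_j:=u_jv_j\,e_{p-1}(v_1,\dots,\widehat{v_j},\dots,v_N)$ and the identity
\[
 \sum_j\int_{[0,1]^N}\partial_{u_j}\bigl(F_j\,W_n\,\ee^{-x\sum u}\bigr)\,du=0 .
\]
The boundary terms vanish because $u_jv_jW_n$ behaves like $u_j^{1/n}$ at $u_j=0$ and like $v_j^{1/n}$ at $u_j=1$, and $1/n>0$. The Vandermonde singularities are integrable, and a standard regularization (cutting out $|u_j-u_k|<\varepsilon$) justifies the integration by parts since $2/n>0$.

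Expanding the divergence gives four groups of terms, each of which I would convert to elementary symmetric polynomials using $\sum_j v_je_{p-1}(\widehat v_j)=pe_p$ and $\sum_j v_j^2e_{p-1}(\widehat v_j)=e_1e_p-(p+1)e_{p+1}$.
\begin{enumerate}[label=(\alph*)]
\item The exponential term is $-x\sum_j u_jv_je_{p-1}(\widehat v_j)=-x\bigl(pe_p-e_1e_p+(p+1)e_{p+1}\bigr)$.
\item The one-body weight term plus $\partial_{u_j}(u_jv_j)$ combine to $(1/n)(v_j-u_j)=(1/n)(2v_j-1)$. Multiplied by $e_{p-1}(\widehat v_j)$ and summed, this gives $(1/n)\bigl(2pe_p-(N-p+1)e_{p-1}\bigr)$.
\item The Vandermonde term is $(2/n)\sum_{j\ne k}u_jv_je_{p-1}(\widehat v_j)/(u_j-u_k)$. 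After symmetrizing in $j\leftrightarrow k$, writing $e_{p-1}(\widehat v_j)=v_ke_{p-2}(\widehat v_{jk})+e_{p-1}(\widehat v_{jk})$, and using $u_jv_j-u_kv_k=(u_j-u_k)(v_j-u_k)$, this becomes a polynomial in lower-order combinations of $e_{p-1}$ and $e_p$. Its exact coefficients are what produce the specific constants in $B_p$ and $D_p$.
\item The $e_1e_p$ term appearing in (a) is replaced by $\binom Np\mathcal S_n\widehat H_p'+Ne_p$, using the derivative identity above.
\end{enumerate}

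Finally I would divide by $\binom Np\mathcal S_n$ and convert the binomial ratios via $\binom N{p\pm1}/\binom Np$, which produces the factors $(N-p)$ on $\widehat H_{p+1}$ and the normalization of $D_p$. Collecting coefficients of $\widehat H_{p+1},\widehat H_p,\widehat H_{p-1},\widehat H_p'$ should reproduce \eqref{eq:Hrec}, up to an overall factor $x$ (note that \eqref{eq:Hrec} carries $x\widehat H_p'$). The cases $p=0$ and $p=N$ are consistent with the conventions $\widehat H_{-1}=\widehat H_{N+1}=0$, because the relevant coefficient $p$, respectively $N-p$, vanishes.

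The main obstacle will be step (c): the Vandermonde symmetrization and the bookkeeping that yields exactly $B_p=p(4n+1-p)/n$ and $D_p=p(2n-p+1)/n$. If the vector field $F_j$ does not give the right constants directly, I would add a multiple of the simpler identity obtained from $F_j=u_jv_je_{p-2}(\widehat v_j)$, or of the $x$-free Aomoto relation, to adjust. As a check I would test $n=1$, $N=2$ against the explicit Beta integral.
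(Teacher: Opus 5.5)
Your plan is correct and is essentially the paper's proof. The paper integrates the same Aomoto-type divergence, with field $y_i(1-y_i)e^{(i)}_{p-1}(y)$ in $y=1-u$; your $\partial_{u_j}$ only flips the overall sign. It uses the same symmetric-function identities and differs only cosmetically, by absorbing $\ee^{-Nx}$ into $\widetilde H_p=\ee^{Nx}\widehat H_p$ instead of using your derivative identity in step (d).

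Carrying out step (c) gives the Vandermonde term $\frac2n\bigl(\frac{p(2N-p-1)}2e_p-\binom{N-p+1}2e_{p-1}\bigr)$. This yields $B_p$ and $D_p$ directly, so no auxiliary identity is needed. The result is exactly \eqref{eq:Hrec}, with no extra overall factor of $x$.
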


\begin{proof}
We include the integration-by-parts argument to fix the normalization.  Set $y_i=1-u_i$.  The weight is unchanged in form,
\[
 \prod_i y_i^a(1-y_i)^a\prod_{i<j}|y_i-y_j|^{\vartheta},
 \qquad
 a=-1+\frac1n,
 \quad
 \vartheta=\frac2n,
\]
while $\ee^{-x\sum u_i}=\ee^{-Nx}\ee^{x\sum y_i}$.  For $p\ge1$, integrate
\[
 \sum_{i=1}^N\partial_{y_i}
 \left[y_i(1-y_i)e_{p-1}^{(i)}(y)\ee^{x\sum_jy_j}
 \prod_\ell y_\ell^a(1-y_\ell)^a
 \prod_{j<k}|y_j-y_k|^\vartheta\right],
\]
where $e_{p-1}^{(i)}$ omits $y_i$.  Here $a+1=1/n>0$, so the boundary terms at $0$ and $1$ vanish after multiplication by $y_i(1-y_i)$.  The pair-collision singularities created by differentiating the Vandermonde factor are locally integrable because $\vartheta=2/n>0$, so the integration by parts is legitimate (or may be justified first off the collision hyperplanes and then by a limiting argument).

The elementary-symmetric identities
\begin{align*}
 \sum_i e_{p-1}^{(i)}&=(N-p+1)e_{p-1},\\
 \sum_i y_i e_{p-1}^{(i)}&=p e_p,\\
 \sum_i y_i^2 e_{p-1}^{(i)}&=e_1e_p-(p+1)e_{p+1}
\end{align*}
and the pairwise symmetrization
\begin{align*}
 &\sum_i y_i(1-y_i)e_{p-1}^{(i)}\sum_{j\ne i}\frac1{y_i-y_j}\\
 &\qquad=\binom{N-p+1}{2}e_{p-1}
 -\frac{p(2N-p-1)}2e_p
\end{align*}
give, before the gauge factor $\ee^{-Nx}$ is restored,
\begin{align*}
 (N-p)x\widetilde H_{p+1}
 ={}&x\widetilde H_p'-px\widetilde H_p
 +p\left(2+2a+\frac\vartheta2(2N-p-1)\right)\widetilde H_p\\
 &-p\left(1+a+\frac\vartheta2(N-p)\right)\widetilde H_{p-1},
\end{align*}
where $\widetilde H_p=\ee^{Nx}\widehat H_p$.  Substitution of $N=2n$, $a=-1+1/n$ and $\vartheta=2/n$ gives~\eqref{eq:Hrec}--\eqref{eq:BD}.  The case $p=0$ follows directly by differentiating $\widehat H_0$.
\end{proof}

Write $\widehat H=(\widehat H_0,\ldots,\widehat H_N)^T$.  Lemma~\ref{lem:Hrec} is equivalent to
\begin{equation}\label{eq:Hmatrix}
 x\frac d{dx}\widehat H=(xX_0+Y_0)\widehat H,
\end{equation}
where
\begin{align}
 (X_0)_{p,p}&=-(N-p),&(X_0)_{p,p+1}&=N-p,\label{eq:X0}\\
 (Y_0)_{p,p}&=-\frac{p(4n+1-p)}n,&
 (Y_0)_{p,p-1}&=\frac{p(2n-p+1)}n.\label{eq:Y0}
\end{align}

Define the lower-triangular matrix
\begin{equation}\label{eq:Tdef}
 T_{r,p}:=(-1)^p2^{p-r}\binom rp,
 \qquad 0\le p\le r\le N.
\end{equation}
The translation formula for elementary symmetric polynomials gives
\begin{equation}\label{eq:translation}
 \frac{e_r(u_1-1/2,\ldots,u_N-1/2)}{\binom Nr}
 =\sum_{p=0}^{r}T_{r,p}
 \frac{e_p(1-u_1,\ldots,1-u_N)}{\binom Np}.
\end{equation}
Set
\begin{equation}\label{eq:Kselberg}
 K_r(\lambda):=
 \ee^{-\ii n\lambda}\sum_{p=0}^{r}T_{r,p}\widehat H_p(-\ii\lambda),
 \qquad 0\le r\le N.
\end{equation}
Then, in particular,
\begin{equation}\label{eq:K0G}
 K_0(\lambda)=G_n(\lambda).
\end{equation}

\begin{lemma}[Centered Selberg system]\label{lem:centered}
The vector $K=(K_0,\ldots,K_N)^T$ satisfies
\begin{equation}\label{eq:Kplus}
 \Dop K=(Y^{(n+1)}-\ii\lambda X)K.
\end{equation}
\end{lemma}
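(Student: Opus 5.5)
The plan is to derive \eqref{eq:Kplus} from the Jacobi trace system \eqref{eq:Hmatrix} in three steps: a change of variable, the gauge factor $\ee^{-\ii n\lambda}$, and conjugation by the triangular matrix $T$. The whole statement then reduces to the single matrix identity
\[
 T(Y_0-\ii\lambda X_0)T^{-1}-\ii n\lambda\,\Id = Y^{(n+1)}-\ii\lambda X,
\]
which splits into a $\lambda$-free part and a part linear in $\lambda$.

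\emph{Change of variable and gauge.} Put $x=-\ii\lambda$. Then $x\,d/dx=\lambda\,d/d\lambda=\Dop$, so $\widetilde K(\lambda):=\widehat H(-\ii\lambda)$ satisfies
\[
 \Dop\widetilde K=(Y_0-\ii\lambda X_0)\widetilde K.
\]
Writing $K=\ee^{-\ii n\lambda}T\widetilde K$ and using $\Dop\ee^{-\ii n\lambda}=-\ii n\lambda\,\ee^{-\ii n\lambda}$, we obtain
\[
 \Dop K=\bigl(TY_0T^{-1}-\ii\lambda(TX_0T^{-1}+n\Id)\bigr)K.
\]
It therefore suffices to prove
\[
 TY_0=Y^{(n+1)}T,
 \qquad
 TX_0+nT=XT.
\]

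\emph{Linear part.} I would avoid brute-force matrix products by reading both identities through the translation formula \eqref{eq:translation}. There, $T$ converts normalized elementary symmetric functions of $y=1-u$ into those of $v=u-\tfrac12$. The identity $TX_0+nT=XT$ should then just restate the elementary relation
\[
 \sum_j u_j=n+\sum_j v_j,\qquad \sum_j v_j=e_1(v),
\]
together with the multiplication rule for $e_1(v)e_r(v)$. That rule involves $e_{r+1}$ and lower terms, and after integrating against the symmetric weight it reproduces the tridiagonal $X$, whose entries are $-(N-r)$ and $-r/4$. As a concrete check, I would verify the identity entrywise with the generating function $\sum_r\binom Nr T_{r,p}t^r$, which is a binomial-type expression $(2+\ldots)$. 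Conjugation by $T$ then becomes a change of generating variable $s\mapsto$ (affine function of $t$), and both sides turn into first-order differential operators in $t$ that can be compared directly.

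\emph{Constant part (main obstacle).} The identity $TY_0=Y^{(n+1)}T$ is, I expect, the hardest step. $Y_0$ is lower bidiagonal with diagonal $-p(4n+1-p)/n$, while $Y^{(n+1)}$ has diagonal $-r(4n+1-r)/n$ and a subdiagonal only at distance $2$. Since $T$ is lower triangular with diagonal $(-1)^r$, the diagonals already match, and what remains is to show that the conjugated subdiagonals reduce to the single entry $-r(r-1)/(4n)$ two steps down. My first approach is generating functions again. Encode $T$ by $(2-t)$-type binomial kernels, and write $Y_0$ and $Y^{(n+1)}$ as second-order operators in $\theta=t\partial_t$ exactly as in the proof of Lemma~\ref{lem:kraw}: $Y_0$ as $\theta$-quadratic plus $t$ times a $\theta$-quadratic, and $Y^{(n+1)}$ as $\theta$-quadratic plus $t^2$ times one. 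The centering shift $y\mapsto v=\tfrac12-y$ is precisely what kills the first-order subdiagonal, by the symmetry $u\mapsto 1-u$ of $W_n$. If the operator computation becomes unwieldy, I would instead check $(TY_0T^{-1})_{r,r-1}=0$ and $(TY_0T^{-1})_{r,r-j}=0$ for $j\ge3$ directly as binomial sums, since $T^{-1}_{r,p}=(-1)^r2^{p-r}\binom rp$.
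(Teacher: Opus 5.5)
Your reduction is the paper's. With $x=-\ii\lambda$ and $\ee^{-\ii n\lambda}=\ee^{Nx/2}$, the lemma is equivalent to $TX_0=(X-\tfrac N2\Id)T$ and $TY_0=Y^{(n+1)}T$, and your $TX_0+nT=XT$ is the first of these since $N/2=n$. Both identities are true. The only real content is verifying them, and you leave that undone; the routes you sketch contain one wrong formula and one argument that is not a proof.

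\textbf{The formula for $T^{-1}$ is wrong.} The matrix $T$ expresses the substitution $y\mapsto\tfrac12-y$, which is an involution, so $T^{-1}=T$, i.e.\ $T^{-1}_{r,p}=(-1)^p2^{p-r}\binom rp$. With your sign $(-1)^r$ one gets $\sum_qT_{r,q}(-1)^q2^{p-q}\binom qp=\binom rp$. So your product $TT^{-1}$ is the Pascal matrix, and the binomial sums you propose for $(TY_0T^{-1})_{r,r-j}$ would not vanish.

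In fact no inverse is needed, and the constant part is not an obstacle: compare $TY_0$ with $Y^{(n+1)}T$ entrywise. Use
\[
 (p+1)T_{r,p+1}=-2(r-p)T_{r,p},
 \qquad
 r(r-1)T_{r-2,p}=4(r-p)(r-p-1)T_{r,p}.
\]
Then the $(r,p)$ entries of $nTY_0$ and $nY^{(n+1)}T$ are $T_{r,p}$ times
\[
 -p(4n+1-p)-2(2n-p)(r-p)
 \quad\text{and}\quad
 -r(4n+1-r)-(r-p)(r-p-1),
\]
and both brackets equal $-4nr+2pr-p^2-p$.

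For the linear identity, your column generating function $g_p(t)=\sum_r\binom NrT_{r,p}t^r=\binom Np(-t)^p(1+t/2)^{N-p}$ does work. The identity becomes $-(N-p)g_p+(N-p+1)g_{p-1}+ng_p=-\tfrac t4(N-t\partial_t)g_p-\partial_tg_p$, and both sides equal $(\tfrac p2-n-\tfrac pt)g_p$. These checks are what the paper means by ``standard binomial ratios.''

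\textbf{The integral heuristics do not prove the matrix identities.} They cannot establish identities between constant matrices. The one for the linear part is also inaccurate. We have $e_1(v)e_r(v)=(r+1)e_{r+1}(v)+\sum_iv_i^2e^{(i)}_{r-1}(v)$, and the second term has degree $r+1$; it is not a combination of lower elementary functions. The term $(r+1)e_{r+1}$ accounts for the entry $-(N-r)$ of $X$. The substitution $v_i^2=\tfrac14-u_i(1-u_i)$ accounts for the entry $-r/4$. But the remainder $\sum_iu_i(1-u_i)e^{(i)}_{r-1}(v)$ does not integrate away: an integration by parts must convert it into the $Y^{(n+1)}$ terms. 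Carried out completely, that would be an alternative derivation of the centered system that bypasses $T$ altogether. As written, both it and the appeal to the symmetry $u\mapsto1-u$ are motivation only.
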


\begin{proof}
The binomial matrix~\eqref{eq:Tdef} satisfies the two identities
\begin{equation}\label{eq:center-conj}
 TX_0=(X-\tfrac N2\Id)T,
 \qquad
 TY_0=Y^{(n+1)}T.
\end{equation}
For completeness, the $(r,p)$ entries of the first identity reduce to
\begin{align*}
 &-(N-p)T_{r,p}+(N-p+1)T_{r,p-1}\\
 &\qquad=-\frac r4T_{r-1,p}-(N-r)T_{r+1,p}-\frac N2T_{r,p},
\end{align*}
and those of the second to
\begin{align*}
 &-\frac{p(4n+1-p)}nT_{r,p}
 +\frac{(p+1)(2n-p)}nT_{r,p+1}\\
 &\qquad=-\frac r n(4n+1-r)T_{r,p}
 -\frac{r(r-1)}{4n}T_{r-2,p}.
\end{align*}
Both follow immediately after substituting~\eqref{eq:Tdef} and the standard binomial ratios; terms with indices outside their ranges are zero.

Now multiply~\eqref{eq:Hmatrix} by the centering gauge $\ee^{Nx/2}$ and by $T$.  Using~\eqref{eq:center-conj}, and then setting $x=-\ii\lambda$, gives~\eqref{eq:Kplus}.
\end{proof}

\section{A parameter-reflection intertwiner}\label{sec:intertwiner}

The bridge between~\eqref{eq:Msystem} and~\eqref{eq:Kplus} is an explicit triangular differential transform.  For $m\ge0$, write $(\eta)^{\uparrow m}=\eta(\eta+1)\cdots(\eta+m-1)$.

\begin{lemma}[Reflection $\eta\mapsto-\eta$]\label{lem:duality}
Suppose $K$ solves
\[
 \Dop K=(Y^{(\eta)}-\ii\lambda X)K.
\]
On a punctured domain with a fixed branch of $\lambda^{2\eta}$, define
\begin{equation}\label{eq:intertwiner}
 (\mathcal T_\eta K)_r
 :=\lambda^{2\eta}\sum_{j=0}^{r}\binom rj
 \left(-\frac{\ii}{n\lambda}\right)^{r-j}
 (\eta)^{\uparrow(r-j)}K_j,
 \qquad 0\le r\le N.
\end{equation}
Then $U:=\mathcal T_\eta K$ satisfies
\begin{equation}\label{eq:dual-system}
 \Dop U=(Y^{(-\eta)}-\ii\lambda X)U.
\end{equation}
\end{lemma}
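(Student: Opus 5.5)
Write $U=\lambda^{2\eta}V$ with $V_r=\sum_{j\le r}\binom rj c^{r-j}(\eta)^{\uparrow(r-j)}\lambda^{-(r-j)}K_j$, where $c:=-\ii/n$. Then $V=\Lambda K$ for the lower-triangular matrix
\[
 \Lambda_{r,j}=\binom rj c^{r-j}(\eta)^{\uparrow(r-j)}\lambda^{j-r}.
\]
Since $\Dop\lambda^{2\eta}=2\eta\lambda^{2\eta}$, the claim \eqref{eq:dual-system} is equivalent to the operator identity
\[
 \Dop\Lambda+\Lambda(Y^{(\eta)}-\ii\lambda X)=(Y^{(-\eta)}-2\eta\Id-\ii\lambda X)\Lambda,
\]
with $(\Dop\Lambda)_{r,j}=(j-r)\Lambda_{r,j}$. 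Writing $\Lambda=\sum_{m\ge0}\lambda^{-m}\Lambda^{(m)}$, where $\Lambda^{(m)}$ is supported on the $m$-th subdiagonal, I will split this identity by powers of $\lambda$. Note that $X$ shifts the index by $\pm1$, while $Y^{(\eta)}$ contributes a diagonal part and a shift by $-2$. The relevant total power is $\lambda^{1-m}$ on the entries with $r-j=m-1\pm1$, and $\lambda^{-m}$ on the entries with $r-j=m$ or $m+2$.

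First I match coefficients of $\lambda^{1}$, entry by entry. The $m=0$ term gives $\Lambda^{(0)}=\Id$, so the condition reads $X=X$. Next I match, at each order, the $X$-terms against the $\Dop$- and $Y$-terms one order lower. This reduces everything to two scalar binomial identities in $r,j$. The first, on the entries $(r,j)$, equates
\[
 -\ii\bigl[\Lambda X-X\Lambda\bigr]_{r,j}\quad\text{at order }\lambda^{-(r-j-1)}
\]
with
\[
 \bigl[(Y^{(-\eta)}_{\rm diag}-2\eta)\Lambda-\Lambda Y^{(\eta)}_{\rm diag}-\Dop\Lambda\bigr]_{r,j}.
\]
The second handles the remaining $Y_{r,r-2}$-shift contributions. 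Explicitly, $\Lambda X-X\Lambda$ produces entries
\[
 -\tfrac{j+1}{4}\Lambda_{r,j+1}-(N-j+1)\Lambda_{r,j-1}+\tfrac r4\Lambda_{r-1,j}+(N-r)\Lambda_{r+1,j}.
\]
The $(N-\cdot)$ terms live at order $\lambda^{-(r-j)+1}$ and must cancel against the diagonal difference $Y^{(-\eta)}_{r,r}-2\eta-Y^{(\eta)}_{j,j}+(r-j)$ times $\Lambda_{r,j}$. The $\tfrac14$ terms sit at order $\lambda^{-(r-j)-1}$ and must cancel against the $Y_{\cdot,\cdot-2}$ terms.

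For the first identity I expand $Y^{(-\eta)}_{r,r}-Y^{(\eta)}_{j,j}$ using \eqref{eq:Ydef}. The resulting quadratic in $r,j$ should factor as $c'(r-j)(\eta+\cdots)$, matching $(N-r)\binom{r+1}{j}(\eta)^{\uparrow(r+1-j)}-(N-j+1)\binom r{j-1}(\eta)^{\uparrow(r-j+1)}$ after dividing by $c$. The $-2\eta$ shift should be exactly what makes the $\eta$-linear terms cancel. This is where the factor $c=-\ii/n$ is forced: the factor $\ii$ from $-\ii\lambda X$ and the $1/n$ in $Y$ combine. For the second identity I use $Y_{r,r-2}=-r(r-1)/(4n)$, which is independent of $\eta$. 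The two $\tfrac14 X$-terms, with $\binom{r}{j+1}$ and $\binom{r-1}{j}$, then combine via Pascal-type relations with $\binom{r-2}{j}$ and $\binom{r}{j+2}$. The step $(\eta)^{\uparrow(m+1)}=(\eta)^{\uparrow m}(\eta+m)$ should absorb the mismatch, leaving a multiple of $c^2=-1/n^2$ that equals the $1/(4n)$ coefficient divided by $n$.

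The main obstacle will be the bookkeeping of which $\lambda$-orders pair up, together with verifying that the diagonal identity closes exactly with the particular shift $-2\eta$ and the constant $3n$ in $Y$. I will first sanity-check $N=2$ (so $n=1$) by hand, computing $\Lambda$ as a $3\times3$ matrix, before writing the general binomial verification. If the direct entry computation becomes messy, the fallback is a generating-function argument. The transform $\sum_j\binom rj(\eta)^{\uparrow(r-j)}z^{r-j}$ arises from applying $(1-z\,\cdot)^{-\eta}$-type operators. In that representation, $X$ and $Y^{(\eta)}$ act as differential operators in the generating variable $t$, much as in the proof of Lemma~\ref{lem:kraw}, and the intertwining becomes a conjugation by a power of a linear factor that visibly sends $\eta$ to $-\eta$.
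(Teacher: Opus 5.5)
Your reduction is the same as the paper's: the matrix identity $\Dop\Lambda+\Lambda(Y^{(\eta)}-\ii\lambda X)=(Y^{(-\eta)}-2\eta\Id-\ii\lambda X)\Lambda$, split entrywise by powers of $\lambda$, gives exactly the paper's two coefficient identities \eqref{eq:int-id1}--\eqref{eq:int-id2}, and both hold, so the plan is correct. When you write it out, fix three details of your predicted bookkeeping: in $\Lambda X-X\Lambda$ the $(N-\cdot)$ terms sit at order $\lambda^{-(r-j)-1}$ and the $\tfrac14$ terms at order $\lambda^{-(r-j)+1}$ (you have these reversed); the full diagonal coefficient $Y^{(-\eta)}_{r,r}-2\eta-Y^{(\eta)}_{j,j}+(r-j)$ equals $(\eta+r-j)(r+j-N)/n$ and has no factor $r-j$; and in the other identity there is no cross-cancellation, because the two $\tfrac14 X$-terms cancel by $(j+1)\binom r{j+1}=r\binom{r-1}j$ and the two $Y_{\cdot,\cdot-2}$-terms cancel by $(j+1)(j+2)\binom r{j+2}=r(r-1)\binom{r-2}j$ (the two groups carry rising factorials of different degrees in $\eta$, so they could not cancel each other identically in $\eta$).
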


\begin{proof}
Put
\[
 a_{rj}:=\binom rj\left(-\frac{\ii}{n\lambda}\right)^{r-j}
 (\eta)^{\uparrow(r-j)},
 \qquad m:=r-j.
\]
Then $\Dop a_{rj}=-ma_{rj}$.  Write
\[
 d_r(\eta):=-\frac r n(\eta+3n-r),
 \qquad
 c_r:=-\frac{r(r-1)}{4n}.
\]
The $r$-th component of the $\eta$-system is
\begin{equation}\label{eq:eta-component}
 \Dop K_r=d_r(\eta)K_r+c_rK_{r-2}
 +\ii\lambda\frac r4K_{r-1}+\ii\lambda(N-r)K_{r+1}.
\end{equation}
After differentiating~\eqref{eq:intertwiner} and inserting~\eqref{eq:eta-component}, equality with the $(-\eta)$-system reduces coefficientwise to
\begin{equation}\label{eq:int-id1}
 a_{r,s+2}c_{s+2}+\ii\lambda\frac{s+1}{4}a_{r,s+1}
 =c_ra_{r-2,s}+\ii\lambda\frac r4a_{r-1,s}
\end{equation}
and
\begin{equation}\label{eq:int-id2}
 \bigl(2\eta-m+d_s(\eta)\bigr)a_{r,s}
 +\ii\lambda(N-s+1)a_{r,s-1}
 =d_r(-\eta)a_{r,s}+\ii\lambda(N-r)a_{r+1,s}.
\end{equation}
The first identity follows from binomial ratios.  For the second,
\[
 \frac{a_{r,s-1}}{a_{r,s}}
 =\frac{s}{m+1}\left(-\frac{\ii}{n\lambda}\right)(\eta+m),
 \qquad
 \frac{a_{r+1,s}}{a_{r,s}}
 =\frac{r+1}{m+1}\left(-\frac{\ii}{n\lambda}\right)(\eta+m).
\]
Since $r=s+m$ and $N=2n$,~\eqref{eq:int-id2} reduces to
\[
 2\eta-m+d_s(\eta)-d_r(-\eta)
 +\frac{(r+s-N)(\eta+m)}n=0,
\]
which is an identity.  This proves~\eqref{eq:dual-system}.
\end{proof}

Apply Lemma~\ref{lem:duality} to the Selberg solution~\eqref{eq:Kplus} with $\eta=n+1$.  Since $2\eta=N+2$ is an integer, the negative powers in~\eqref{eq:intertwiner} are canceled by the prefactor.  Thus
\begin{equation}\label{eq:Udef}
 U:=\mathcal T_{n+1}K
\end{equation}
is entire and satisfies the same system as $M$:
\begin{equation}\label{eq:Usystem}
 \Dop U=(Y^{(-(n+1))}-\ii\lambda X)U.
\end{equation}
Its first component is
\begin{equation}\label{eq:Ufirst}
 U_0(\lambda)=\lambda^{N+2}K_0(\lambda)
 =\lambda^{N+2}G_n(\lambda).
\end{equation}
Since $K_0(0)=1$, the lowest power of $\lambda$ occurs only in the last component, giving
\begin{equation}\label{eq:Uasymp}
 U(\lambda)=c_I\lambda^2e_N+O(\lambda^3),
 \qquad
 c_I=\left(-\frac\ii n\right)^N(n+1)^{\uparrow N}
 =(-1)^n\frac{(3n)!}{n^{2n}n!}.
\end{equation}

\section{Frobenius matching and direct equivalence}\label{sec:frob}

\begin{lemma}[Uniqueness of the exponent-$2$ branch]\label{lem:frob}
For
\begin{equation}\label{eq:minus-system}
 \Dop V=(Y^{(-(n+1))}-\ii\lambda X)V,
\end{equation}
there is, up to an overall scalar, at most one analytic solution of the form
\[
 V(\lambda)=\lambda^2(v_0+O(\lambda)),
 \qquad v_0\ne0.
\]
Its leading vector is proportional to $e_N$.
\end{lemma}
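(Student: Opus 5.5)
The plan is to treat \eqref{eq:minus-system} as a linear system with a regular singular point at $\lambda=0$ and to run the Frobenius recursion directly. The residue matrix is $Y:=Y^{(-(n+1))}$. By \eqref{eq:Ydef}, $Y$ is lower triangular: it has diagonal entries and entries in position $(r,r-2)$ only. Its eigenvalues are therefore its diagonal entries
\[
 d_r:=Y_{r,r}=-\frac rn\bigl(2n-1-r\bigr)=\frac{r(r+1-N)}{n},\qquad 0\le r\le N .
\]

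\textbf{Step 1: only $r=N$ gives exponent $2$.} I first locate the exponent $2$ in the spectrum. The equation $d_r=2$ becomes $r^2+(1-N)r-N=0$, which factors as $(r-N)(r+1)=0$. Hence $r=N$ is the only solution in $\{0,\ldots,N\}$, and the eigenvalue $2$ of $Y$ is algebraically simple.

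\textbf{Step 2: no larger exponent $k+2$ occurs.} The function $r\mapsto r(r+1-N)/n$ is a convex quadratic. On the interval $[0,N]$ its maximum is therefore attained at an endpoint, and the two endpoint values are $d_0=0$ and $d_N=2$. Thus $d_r\le2$ for every $r$. It follows that $k+2-Y$ is invertible for every integer $k\ge1$.

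\textbf{Step 3: the leading vector.} The $N$-th column of $Y$ contains only the diagonal entry, because an entry $(r,r-2)$ with $r-2=N$ would need $r=N+2>N$. Hence $Ye_N=2e_N$. By Step 1, $\ker(2-Y)=\C e_N$.

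\textbf{Step 4: the recursion.} Write a candidate solution as $V(\lambda)=\lambda^2\sum_{k\ge0}c_k\lambda^k$ with $c_0=v_0\ne0$. Inserting this into $\Dop V=(Y-\ii\lambda X)V$ and comparing coefficients of $\lambda^{k+2}$ gives
\[
 (2-Y)c_0=0,\qquad (k+2-Y)c_k=-\ii Xc_{k-1}\quad(k\ge1).
\]
The first equation and Step 3 give $c_0\in\C e_N$. This proves the claim that the leading vector is proportional to $e_N$. For $k\ge1$, Step 2 shows that the second equation determines $c_k$ uniquely from $c_{k-1}$. By induction, every $c_k$ is a fixed linear function of $c_0$.

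\textbf{Conclusion.} Any two such analytic solutions have proportional leading vectors, since both lie in $\C e_N$. After rescaling one of them, the two solutions share $c_0$, and hence by Step 4 they share every coefficient. So they agree as power series, and therefore as analytic functions on their common disc of convergence. This gives uniqueness up to an overall scalar.

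Existence is not claimed in the lemma, and in any case $M$ and $U$ already provide such solutions. There is no real obstacle here; the only point requiring care is the eigenvalue computation, namely that $2$ is a simple eigenvalue and that no $d_r$ exceeds $2$.
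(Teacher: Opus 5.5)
Your proposal is correct and follows the paper's proof essentially step by step. You use the diagonal of the lower-triangular residue matrix, the factorization $(r-N)(r+1)=0$ giving the simple eigenvalue $2$ with eigenvector $e_N$, and the Frobenius recursion $((k+2)\Id-Y)c_k=-\ii Xc_{k-1}$, which is invertible for $k\ge1$. The only cosmetic difference is that you bound $d_r\le 2$ by convexity, whereas the paper notes directly that $d_r=r(r+1-N)/n\le 0$ for $r<N$.
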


\begin{proof}
The diagonal entries of the lower-triangular matrix $Y^{(-(n+1))}$ are
\begin{equation}\label{eq:minus-diag}
 \delta_r=\frac{r(r+1-N)}n,
 \qquad 0\le r\le N.
\end{equation}
The equation $\delta_r=2$ factors as
\[
 (r-N)(r+1)=0,
\]
so $2$ is a simple eigenvalue, occurring only at $r=N$; its eigenspace is spanned by $e_N$.

Write
\[
 V(\lambda)=\lambda^2\sum_{m\ge0}v_m\lambda^m.
\]
The coefficient recursion is
\begin{equation}\label{eq:frob-rec}
 \bigl((m+2)\Id-Y^{(-(n+1))}\bigr)v_m
 =-\ii Xv_{m-1},
 \qquad v_{-1}:=0.
\end{equation}
For $m\ge1$, the matrix on the left is invertible because every diagonal entry of $Y^{(-(n+1))}$ other than the terminal value $2$ is nonpositive.  Hence $v_0$ determines all subsequent coefficients.
\end{proof}

\begin{proof}[Proof of Theorem~\ref{thm:intro-equiv}]
The vectors $M$ and $U$ satisfy the same system~\eqref{eq:minus-system}.  Their leading terms are~\eqref{eq:Masymp} and~\eqref{eq:Uasymp}.  Lemma~\ref{lem:frob} therefore gives
\begin{equation}\label{eq:MUrelation}
 M(\lambda)=\frac{c_Q}{c_I}U(\lambda).
\end{equation}
The quotient is
\begin{equation}\label{eq:normalization-ratio}
 \frac{c_Q}{c_I}
 =\frac{(-1)^n/\binom{2n}{n}}
 {(-1)^n(3n)!/(n^{2n}n!)}
 =\frac{n^{2n}(n!)^3}{(2n)!(3n)!}
 =D_n.
\end{equation}
Taking first components in~\eqref{eq:MUrelation} and using~\eqref{eq:Mfirst} and~\eqref{eq:Ufirst} yields
\[
 \lambda^2F_n(\lambda)=D_n\lambda^{N+2}G_n(\lambda).
\]
Hence
\[
 F_n(\lambda)=D_n\lambda^NG_n(\lambda),
\]
first for $\lambda\ne0$ and then everywhere by analyticity.  Combining this with~\eqref{eq:Fq} and the Qu--Valk\'o Taylor representation~\eqref{eq:qv-series} proves~\eqref{eq:intro-equivalence}.
\end{proof}

\section{Coefficientwise identification}\label{sec:coeff}

Let $(U_1,\ldots,U_N)$ have probability density $W_n/\mathcal S_n$ on $[0,1]^N$ and define the centered trace
\begin{equation}\label{eq:Zn}
 Z_n:=\sum_{j=1}^{N}U_j-n.
\end{equation}
The symmetry $u_j\mapsto1-u_j$ gives
\begin{equation}\label{eq:Gmoment}
 G_n(\lambda)=\E\ee^{\ii\lambda Z_n}
 =\sum_{r\ge0}\frac{(-1)^r\E[Z_n^{2r}]}{(2r)!}\lambda^{2r}.
\end{equation}
Comparison with Theorem~\ref{thm:intro-equiv} yields the following direct dictionary between the matrix recursion and Selberg moments.

\begin{corollary}\label{cor:coeff}
For $1\le j<n$,
\[
 v_n^Ts_{2j}=0.
\]
For every $r\ge0$,
\begin{equation}\label{eq:coeff-id}
 2v_n^Ts_{2(n+r)}
 =D_n\frac{(-1)^r}{(2r)!}\E[Z_n^{2r}].
\end{equation}
In particular,
\[
 2v_n^Ts_{2n}=D_n,
\]
recovering the leading small-$\lambda$ coefficient.
\end{corollary}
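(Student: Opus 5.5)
The plan is to compare Taylor coefficients on the two sides of the identity in Theorem~\ref{thm:intro-equiv}. For this we first need $G_n$ written as a power series with moment coefficients, i.e.\ we must justify~\eqref{eq:Gmoment}.

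\textbf{Step 1: the moment expansion~\eqref{eq:Gmoment}.}
\begin{itemize}[label=--]
\item By definition~\eqref{eq:intro-Gn}, $G_n(\lambda)=\E\ee^{\ii\lambda(\sum_j U_j-n)}=\E\ee^{\ii\lambda Z_n}$.
\item The weight $W_n$ is invariant under $u_j\mapsto 1-u_j$ for all $j$ simultaneously. This map sends $Z_n$ to $-Z_n$, so $Z_n$ is symmetric and all odd moments vanish.
\item $|Z_n|\le n$ almost surely, so $\E\ee^{\ii\lambda Z_n}=\sum_k(\ii\lambda)^k\E[Z_n^k]/k!$ converges absolutely for every $\lambda\in\C$ (dominated convergence).
\item Keeping only even $k=2r$ and using $\ii^{2r}=(-1)^r$ gives~\eqref{eq:Gmoment}.
\end{itemize}

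\textbf{Step 2: compare coefficients.} Theorem~\ref{thm:intro-equiv} gives, for all $\lambda\in\C$,
\[
 2\sum_{j\ge1}v_n^Ts_{2j}\lambda^{2j}
 =D_n\sum_{r\ge0}\frac{(-1)^r\E[Z_n^{2r}]}{(2r)!}\lambda^{2n+2r}.
\]
Both sides are entire, so their Taylor coefficients at $0$ agree. The left side is an entire function by the Qu--Valk\'o convergence result~\eqref{eq:qv-series}, and the right side is entire by Step~1.
\begin{itemize}[label=--]
\item For $1\le j<n$, the power $\lambda^{2j}$ does not appear on the right, since its lowest power is $\lambda^{2n}$. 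Hence $v_n^Ts_{2j}=0$.
\item For $j=n+r$ with $r\ge0$, matching the coefficient of $\lambda^{2(n+r)}$ gives~\eqref{eq:coeff-id}.
\item For $r=0$, we have $\E[Z_n^0]=1$, which yields $2v_n^Ts_{2n}=D_n$.
\end{itemize}

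\textbf{Main obstacle.} The only point needing care is that the coefficient identification is legitimate. This rests on the series being genuinely entire on both sides, and on the symmetry argument in Step~1 that removes the odd powers. Everything else is bookkeeping on Taylor coefficients.
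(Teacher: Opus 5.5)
Your proposal is correct and follows the paper's argument: the paper also derives the moment expansion \eqref{eq:Gmoment} from the reflection symmetry $u_j\mapsto 1-u_j$ and bounded support, and then obtains the corollary by comparing Taylor coefficients of the two entire functions in Theorem~\ref{thm:intro-equiv}. One small point: the vanishing of the odd moments is not actually needed for the coefficient match, since the even left-hand side would force those coefficients to vanish anyway, but your symmetry argument is a clean way to justify writing \eqref{eq:Gmoment} in even powers.
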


\section{Discussion}

The two results use different structural features of the Qu--Valk\'o representation.  Joint continuity is obtained before any Fourier-mode estimates are used: stochastic monotonicity of the diffusion turns separate parameter continuity into joint weak continuity of its terminal law.  The Palm formula then transfers this regularity to the pair correlation function.

For even beta, the equivalence proof reduces two apparently different descriptions to one finite-dimensional system.  Three explicit operations are involved:
\begin{enumerate}[label=(\roman*)]
\item a Krawtchouk transform of the Qu--Valk\'o Fourier-mode ODE;
\item centering of the Jacobi--Selberg trace system;
\item the triangular parameter-reflection transform~\eqref{eq:intertwiner}.
\end{enumerate}
The normalization in Forrester's formula is then forced by the leading Frobenius vectors rather than inserted separately.  Corollary~\ref{cor:coeff} shows that the agreement is coefficientwise: the Qu--Valk\'o matrix recursion encodes the centered even moments of a finite Jacobi--Selberg trace.

\paragraph{Relation to fusion asymptotics.}
The coefficientwise identity also gives a direct comparison with the author's short-distance results.  Writing $\mathfrak C_\beta$ for the leading pair-fusion amplitude, the second-order expansion in~\cite{FangSecond} is
\[
 \frac{\rhoS_\beta(0,\lambda)}{\mathfrak C_\beta|\lambda|^\beta}
 =1-\frac{\beta^2}{40(2\beta-1)}\lambda^2+o(\lambda^2),
 \qquad \beta>\tfrac12.
\]
At $\beta=2n$, the leading amplitude is $\mathfrak C_{2n}=D_n/(4\pi^2)$, and this normalized coefficient becomes $-n^2/[10(4n-1)]$.  By Theorem~\ref{thm:intro-equiv} and~\eqref{eq:Gmoment}, the same coefficient is $-\E[Z_n^2]/2$.  Thus comparison with~\cite{FangSecond} yields
\[
 \E[Z_n^2]=\frac{n^2}{5(4n-1)}.
\]
This is a consistency relation between the Selberg trace moments and the fusion expansion, rather than an input to the equivalence proof.

For $0<\beta<\tfrac12$, the normalized first correction has order $|\lambda|^{1+2\beta}$, while at $\beta=\tfrac12$ it has order $\lambda^2\log(1/|\lambda|)$; see~\cite{FangCritical}.  These fixed-parameter asymptotics complement the joint continuity proved in Theorem~\ref{thm:intro-cont}.  Our continuity proof is independent of them and does not require an interchange of the collision limit with the inverse-temperature limit.


\begin{thebibliography}{99}

\bibitem{Aomoto}
K.~Aomoto,
\emph{Jacobi polynomials associated with Selberg's integral},
SIAM J. Math. Anal. \textbf{18} (1987), 545--549.

\bibitem{FangSecond}
W.~Fang,
\emph{Second-order fusion asymptotics for $\Sine_\beta$ correlation functions},
preprint (2026), \href{https://arxiv.org/abs/2608.23742}{arXiv:2608.23742}.

\bibitem{FangCritical}
W.~Fang,
\emph{Critical and subcritical fusion asymptotics for $\Sine_\beta$ correlation functions},
preprint (2026), \href{https://arxiv.org/abs/2609.07239}{arXiv:2609.07239}.

\bibitem{Forrester1992}
P.~J. Forrester,
\emph{Selberg correlation integrals and the $1/r^2$ quantum many-body system},
Nuclear Phys. B \textbf{388} (1992), 671--699.

\bibitem{Forrester1994}
P.~J. Forrester,
\emph{Addendum to ``Selberg correlation integrals and the $1/r^2$ quantum many-body system''},
Nuclear Phys. B \textbf{416} (1994), 377--385.

\bibitem{ForresterBook}
P.~J. Forrester,
\emph{Log-Gases and Random Matrices},
London Mathematical Society Monographs Series 34,
Princeton University Press, 2010.

\bibitem{ForresterKumar}
P.~J. Forrester and S.~Kumar,
\emph{Differential recurrences for the distribution of the trace of the $\beta$-Jacobi ensemble},
Physica D \textbf{434} (2022), 133220.

\bibitem{QuValko}
Y.~Qu and B.~Valk\'o,
\emph{On the pair correlation function of the $\Sine_\beta$ process},
arXiv:2509.15446, 2025.

\end{thebibliography}
\end{document}